\providecommand{\U}[1]{\protect\rule{.1in}{.1in}}
\newtheorem{theorem}{Theorem}
\newtheorem{corollary}[theorem]{Corollary}
\newtheorem{definition}[theorem]{Definition}
\newenvironment{proof}[1][Proof]{\noindent\textbf{#1.} }{\ \rule{0.5em}{0.5em}}
\begin{document}

\title{A new class of generalized Genocchi polynomials}
\author{N. I. Mahmudov\\Eastern Mediterranean University\\Gazimagusa, TRNC, Mersiin 10, Turkey \\Email: nazim.mahmudov@emu.edu.tr}
\maketitle

\begin{abstract}
The main purpose of this paper is to introduce and investigate a new class of
generalized Genocchi polynomials based on the $q$-integers. The $q$-analogues
of well-known formulas are derived. The $q$-analogue of the
Srivastava--Pint\'{e}r addition theorem is obtained.

\end{abstract}

\section{ Introduction}

Throughout this paper, we always make use of the following notation:
$\mathbb{N}$ denotes the set of natural numbers, $\mathbb{N}_{0}$ denotes the
set of nonnegative integers, $\mathbb{R}$ denotes the set of real numbers,
$\mathbb{C}$ denotes the set of complex numbers.

The $q$-shifted factorial is defined by
\[
\left(  a;q\right)  _{0}=1,\ \ \ \left(  a;q\right)  _{n}=%
%TCIMACRO{\dprod \limits_{j=0}^{n-1}}%
%BeginExpansion
{\displaystyle\prod\limits_{j=0}^{n-1}}
%EndExpansion
\left(  1-q^{j}a\right)  ,\ \ \ n\in\mathbb{N},\ \ \ \left(  a;q\right)
_{\infty}=%
%TCIMACRO{\dprod \limits_{j=0}^{\infty}}%
%BeginExpansion
{\displaystyle\prod\limits_{j=0}^{\infty}}
%EndExpansion
\left(  1-q^{j}a\right)  ,\ \ \ \ \left\vert q\right\vert <1,\ \ a\in
\mathbb{C}.
\]
The $q$-numbers and $q$-numbers factorial is defined by%
\[
\left[  a\right]  _{q}=\frac{1-q^{a}}{1-q}\ \ \ \left(  q\neq1\right)
;\ \ \ \left[  0\right]  _{q}!=1;\ \ \ \ \left[  n\right]  _{q}!=\left[
1\right]  _{q}\left[  2\right]  _{q}...\left[  n\right]  _{q}\ \ \ \ \ n\in
\mathbb{N},\ \ a\in\mathbb{C}%
\]
respectively. The $q$-polynomail coefficient is defined by%
\[
\left[
\begin{array}
[c]{c}%
n\\
k
\end{array}
\right]  _{q}=\frac{\left(  q;q\right)  _{n}}{\left(  q;q\right)
_{n-k}\left(  q;q\right)  _{k}}.
\]
The $q$-analogue of the function $\left(  x+y\right)  ^{n}$ is defined by%
\[
\left(  x+y\right)  _{q}^{n}:=%
%TCIMACRO{\dsum \limits_{k=0}^{n}}%
%BeginExpansion
{\displaystyle\sum\limits_{k=0}^{n}}
%EndExpansion
\left[
\begin{array}
[c]{c}%
n\\
k
\end{array}
\right]  _{q}q^{\frac{1}{2}k\left(  k-1\right)  }x^{n-k}y^{k},\ \ \ n\in
\mathbb{N}_{0}.
\]
In the standard approach to the $q$-calculus two exponential function are
used:%
\begin{align*}
e_{q}\left(  z\right)   &  =\sum_{n=0}^{\infty}\frac{z^{n}}{\left[  n\right]
_{q}!}=\prod_{k=0}^{\infty}\frac{1}{\left(  1-\left(  1-q\right)
q^{k}z\right)  },\ \ \ 0<\left\vert q\right\vert <1,\ \left\vert z\right\vert
<\frac{1}{\left\vert 1-q\right\vert },\ \ \ \ \ \ \ \\
E_{q}\left(  z\right)   &  =\sum_{n=0}^{\infty}\frac{q^{\frac{1}{2}n\left(
n-1\right)  }z^{n}}{\left[  n\right]  _{q}!}=\prod_{k=0}^{\infty}\left(
1+\left(  1-q\right)  q^{k}z\right)  ,\ \ \ \ \ \ \ 0<\left\vert q\right\vert
<1,\ z\in\mathbb{C}.\
\end{align*}
From this form we easily see that $e_{q}\left(  z\right)  E_{q}\left(
-z\right)  =1$. Moreover,%
\[
D_{q}e_{q}\left(  z\right)  =e_{q}\left(  z\right)  ,\ \ \ \ D_{q}E_{q}\left(
z\right)  =E_{q}\left(  qz\right)  ,
\]
where $D_{q}$ is defined by%
\[
D_{q}f\left(  z\right)  :=\frac{f\left(  qz\right)  -f\left(  z\right)
}{qz-z}.
\]

Carlitz has introduced the $q$-Bernoulli numbers and polynomials in
\cite{carlitz}. Srivastava and Pinter proved some relations and theorems
between the Bernoulli polynomials and Euler polynomials in \cite{sri1}. They
also gave some generalizations of these polynomials. In \cite{kim1}%
-\cite{kim9}, Kim et al. investigated some properties of the $q$-Euler
polynomials and Genocchi polynomials. They gave some recurrence relations. In
\cite{cenkci}, Cenkci et al. gave the $q$-extension of Genocchi numbers in a
different manner. In \cite{kim5}, Kim gave a new concept for the $q$-Genocchi
numbers and polynomials. In \cite{simsek}, Simsek et al. investigated the
$q$-Genocchi zeta function and $l$-function by using generating functions and
Mellin transformation.

\begin{definition}
The $q$-Bernoulli numbers $\mathfrak{B}_{n,q}^{\left(  \alpha\right)  }$ and
polynomials $\mathfrak{B}_{n,q}^{\left(  \alpha\right)  }\left(  x,y\right)  $
in $x,y$ of order $\alpha$ are defined by means of the generating function
functions:%
\begin{align*}
\left(  \frac{t}{e_{q}\left(  t\right)  -1}\right)  ^{\alpha}  &  =\sum
_{n=0}^{\infty}\mathfrak{B}_{n,q}^{\left(  \alpha\right)  }\frac{t^{n}%
}{\left[  n\right]  _{q}!},\ \ \ \left\vert t\right\vert <2\pi,\\
\left(  \frac{t}{e_{q}\left(  t\right)  -1}\right)  ^{\alpha}e_{q}\left(
tx\right)  E_{q}\left(  ty\right)   &  =\sum_{n=0}^{\infty}\mathfrak{B}%
_{n,q}^{\left(  \alpha\right)  }\left(  x,y\right)  \frac{t^{n}}{\left[
n\right]  _{q}!},\ \ \ \left\vert t\right\vert <2\pi.
\end{align*}

\end{definition}

\begin{definition}
The $q$-Genocchi numbers $\mathfrak{G}_{n,q}^{\left(  \alpha\right)  }$ and
polynomials $\mathfrak{G}_{n,q}^{\left(  \alpha\right)  }\left(  x,y\right)  $
in $x,y$ are defined by means of the generating functions:%
\begin{align*}
\left(  \frac{2t}{e_{q}\left(  t\right)  +1}\right)  ^{\alpha}  &  =\sum
_{n=0}^{\infty}\mathfrak{G}_{n,q}^{\left(  \alpha\right)  }\frac{t^{n}%
}{\left[  n\right]  _{q}!},\ \ \ \left\vert t\right\vert <\pi,\\
\left(  \frac{2t}{e_{q}\left(  t\right)  +1}\right)  ^{\alpha}e_{q}\left(
tx\right)  E_{q}\left(  ty\right)   &  =\sum_{n=0}^{\infty}\mathfrak{G}%
_{n,q}^{\left(  \alpha\right)  }\left(  x,y\right)  \frac{t^{n}}{\left[
n\right]  _{q}!},\ \ \ \left\vert t\right\vert <\pi.
\end{align*}

\end{definition}

It is obvious that%
\begin{align*}
\mathfrak{B}_{n,q}^{\left(  \alpha\right)  }  &  =\mathfrak{B}_{n,q}^{\left(
\alpha\right)  }\left(  0,0\right)  ,\ \ \ \lim_{q\rightarrow1^{-}%
}\mathfrak{B}_{n,q}^{\left(  \alpha\right)  }\left(  x,y\right)
=B_{n}^{\left(  \alpha\right)  }\left(  x+y\right)  ,\ \ \ \lim_{q\rightarrow
1^{-}}\mathfrak{B}_{n,q}^{\left(  \alpha\right)  }=B_{n}^{\left(
\alpha\right)  },\\
\mathfrak{G}_{n,q}^{\left(  \alpha\right)  }  &  =\mathfrak{G}_{n,q}^{\left(
\alpha\right)  }\left(  0,0\right)  ,\ \ \ \lim_{q\rightarrow1^{-}%
}\mathfrak{G}_{n,q}^{\left(  \alpha\right)  }\left(  x,y\right)
=G_{n}^{\left(  \alpha\right)  }\left(  x+y\right)  ,\ \ \ \lim_{q\rightarrow
1^{-}}\mathfrak{G}_{n,q}^{\left(  \alpha\right)  }=G_{n}^{\left(
\alpha\right)  }.
\end{align*}
Here $B_{n}^{\left(  \alpha\right)  }\left(  x\right)  $ and $E_{n}^{\left(
\alpha\right)  }\left(  x\right)  $ denote the classical Bernoulli and
Genocchi polynomials of order $\alpha$ are defined by%
\[
\left(  \frac{t}{e^{t}-1}\right)  ^{\alpha}e^{tx}=\sum_{n=0}^{\infty}%
B_{n}^{\left(  \alpha\right)  }\left(  x\right)  \frac{t^{n}}{\left[
n\right]  _{q}!}\ \ \ \ \ \text{and\ \ \ \ }\left(  \frac{2}{e^{t}+1}\right)
^{\alpha}e^{tx}=\sum_{n=0}^{\infty}G_{n}^{\left(  \alpha\right)  }\left(
x\right)  \frac{t^{n}}{\left[  n\right]  _{q}!}.
\]

The aim of the present paper is to obtain some results for the $q$-Genocchi
polynomials. The $q$-analogues of well-known results, for example, Srivastava
and Pint\'{e}r \cite{pinter}, Cheon \cite{cheon}, etc., can be derived from
these $q$-identities. The formulas involving the $q$-Stirling numbers of the
second kind, $q$-Bernoulli polynomials and $q$-Bernstein polynomials are also
given. Furthermore some special cases are also considered.

The following elementary properties of the $q$-Genocchi polynomials
$\mathfrak{E}_{n,q}^{\left(  \alpha\right)  }\left(  x,y\right)  $ of order
$\alpha$ are readily derived from Definition. We choose to omit the details involved.

\textbf{Property 1. }\emph{Special values of the }$q$\emph{-Genocchi
polynomials of order }$\alpha$\emph{:}%
\[
\mathfrak{E}_{n,q}^{\left(  0\right)  }\left(  x,0\right)  =x^{n}%
,\ \ \ \mathfrak{E}_{n,q}^{\left(  0\right)  }\left(  0,y\right)  =q^{\frac
{1}{2}n\left(  n-1\right)  }y^{n}.
\]

\textbf{Property 2.}\emph{ Summation formulas for the }$q$\emph{-Genocchi
polynomials of order }$\alpha$\emph{:}%
\begin{align*}
\mathfrak{E}_{n,q}^{\left(  \alpha\right)  }\left(  x,y\right)   &  =%
%TCIMACRO{\dsum \limits_{k=0}^{n}}%
%BeginExpansion
{\displaystyle\sum\limits_{k=0}^{n}}
%EndExpansion
\left[
\begin{array}
[c]{c}%
n\\
k
\end{array}
\right]  _{q}\mathfrak{E}_{k,q}^{\left(  \alpha\right)  }\left(  x+y\right)
_{q}^{n-k},\ \ \ \mathfrak{E}_{n,q}^{\left(  \alpha\right)  }\left(
x,y\right)  =%
%TCIMACRO{\dsum \limits_{k=0}^{n}}%
%BeginExpansion
{\displaystyle\sum\limits_{k=0}^{n}}
%EndExpansion
\left[
\begin{array}
[c]{c}%
n\\
k
\end{array}
\right]  _{q}\mathfrak{E}_{n-k,q}^{\left(  \alpha-1\right)  }\mathfrak{E}%
_{k,q}\left(  x,y\right)  ,\\
\mathfrak{G}_{n,q}^{\left(  \alpha\right)  }\left(  x,y\right)   &
=\sum_{k=0}^{n}\left[
\begin{array}
[c]{c}%
n\\
k
\end{array}
\right]  _{q}q^{\left(  n-k\right)  \left(  n-k-1\right)  /2}\mathfrak{G}%
_{k,q}^{\left(  \alpha\right)  }\left(  x,0\right)  y^{n-k}=\sum_{k=0}%
^{n}\left[
\begin{array}
[c]{c}%
n\\
k
\end{array}
\right]  _{q}\mathfrak{G}_{k,q}^{\left(  \alpha\right)  }\left(  0,y\right)
x^{n-k},\\
\mathfrak{G}_{n,q}^{\left(  \alpha\right)  }\left(  x,0\right)   &
=\sum_{k=0}^{n}\left[
\begin{array}
[c]{c}%
n\\
k
\end{array}
\right]  _{q}\mathfrak{G}_{k,q}^{\left(  \alpha\right)  }x^{n-k}%
,\ \ \ \mathfrak{G}_{n,q}^{\left(  \alpha\right)  }\left(  0,y\right)
=\sum_{k=0}^{n}\left[
\begin{array}
[c]{c}%
n\\
k
\end{array}
\right]  _{q}q^{\left(  n-k\right)  \left(  n-k-1\right)  /2}\mathfrak{G}%
_{k,q}^{\left(  \alpha\right)  }y^{n-k}.
\end{align*}

\textbf{Property 3.}\emph{ Difference equations:}%
\begin{align*}
\mathfrak{G}_{n,q}^{\left(  \alpha\right)  }\left(  1,y\right)  +\mathfrak{G}%
_{n,q}^{\left(  \alpha\right)  }\left(  0,y\right)   &  =2\left[  n\right]
_{q}\mathfrak{G}_{n-1,q}^{\left(  \alpha-1\right)  }\left(  0,y\right)  ,\\
\mathfrak{G}_{n,q}^{\left(  \alpha\right)  }\left(  x,0\right)  +\mathfrak{G}%
_{n,q}^{\left(  \alpha\right)  }\left(  x,-1\right)   &  =2\left[  n\right]
_{q}\mathfrak{G}_{n-1,q}^{\left(  \alpha-1\right)  }\left(  x,-1\right)  .
\end{align*}

\textbf{Property 4. }\emph{Differential relations:}%
\[
D_{q,x}\mathfrak{G}_{n,q}^{\left(  \alpha\right)  }\left(  x,y\right)
=\left[  n\right]  _{q}\mathfrak{G}_{n-1,q}^{\left(  \alpha\right)  }\left(
x,y\right)  ,\ \ \ D_{q,y}\mathfrak{G}_{n,q}^{\left(  \alpha\right)  }\left(
x,y\right)  =\left[  n\right]  _{q}\ \mathfrak{G}_{n-1,q}^{\left(
\alpha\right)  }\left(  x,qy\right)  .
\]

\textbf{Property 5. }\emph{Addition theorem of the argument:}%
\[
\mathfrak{E}_{n,q}^{\left(  \alpha+\beta\right)  }\left(  x,y\right)  =%
%TCIMACRO{\dsum \limits_{k=0}^{n}}%
%BeginExpansion
{\displaystyle\sum\limits_{k=0}^{n}}
%EndExpansion
\left[
\begin{array}
[c]{c}%
n\\
k
\end{array}
\right]  _{q}\mathfrak{E}_{n-k,q}^{\left(  \alpha\right)  }\left(  x,0\right)
\mathfrak{E}_{k,q}^{\left(  \beta\right)  }\left(  0,y\right)  .
\]

\textbf{Property 6. }\emph{Recurrence Relationships:}%
\[
\mathfrak{G}_{n,q}^{\left(  \alpha\right)  }\left(  \frac{1}{m},y\right)
+\sum_{k=0}^{n}\left[
\begin{array}
[c]{c}%
n\\
k
\end{array}
\right]  _{q}\left(  \frac{1}{m}-1\right)  _{q}^{n-k}\mathfrak{G}%
_{k,q}^{\left(  \alpha\right)  }\left(  0,y\right)  =2\left[  n\right]
_{q}\sum_{k=0}^{n-1}\left[
\begin{array}
[c]{c}%
n-1\\
k
\end{array}
\right]  _{q}\left(  \frac{1}{m}-1\right)  _{q}^{n-1-k}\mathfrak{G}%
_{k,q}^{\left(  \alpha-1\right)  }\left(  0,y\right)  .
\]

\section{Explicit relationship between the $q$-Genocchi and the $q$-Bernoulli
polynomials}

In this section we prove an interesting relationship between the $q$-Genocchi
polynomials $\mathfrak{G}_{n,q}^{\left(  \alpha\right)  }\left(  x,y\right)  $
of order $\alpha$ and the $q$-Bernoulli polynomials. Here some $q$-analogues
of known results will be given. We also obtain new formulas and their some
special cases below.

\begin{theorem}
\label{S-P1}For $n\in\mathbb{N}_{0}$, the following relationship%
\begin{align*}
\mathfrak{G}_{n,q}^{\left(  \alpha\right)  }\left(  x,y\right)   &  =%
%TCIMACRO{\dsum \limits_{k=0}^{n}}%
%BeginExpansion
{\displaystyle\sum\limits_{k=0}^{n}}
%EndExpansion
\frac{1}{m^{n-k-1}\left[  k+1\right]  _{q}}\left[  2\left[  k+1\right]  _{q}%
%TCIMACRO{\dsum \limits_{j=0}^{k}}%
%BeginExpansion
{\displaystyle\sum\limits_{j=0}^{k}}
%EndExpansion
\left[
\begin{array}
[c]{c}%
k\\
j
\end{array}
\right]  _{q}\frac{1}{m^{k-j}}\mathfrak{G}_{j,q}^{\left(  \alpha-1\right)
}\left(  x,-1\right)  \right. \\
&  -\left.
%TCIMACRO{\dsum \limits_{j=0}^{k+1}}%
%BeginExpansion
{\displaystyle\sum\limits_{j=0}^{k+1}}
%EndExpansion
\left[
\begin{array}
[c]{c}%
k+1\\
j
\end{array}
\right]  _{q}\frac{1}{m^{k+1-j}}\mathfrak{G}_{j,q}^{\left(  \alpha\right)
}\left(  x,-1\right)  -\mathfrak{G}_{k+1,q}^{\left(  \alpha\right)  }\left(
x,0\right)  \right]  \mathfrak{B}_{n-k,q}\left(  0,my\right)  .
\end{align*}
holds true between the $q$-Genocchi and the $q$-Bernoulli polynomials..
\end{theorem}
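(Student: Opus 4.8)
The plan is to work at the level of generating functions and to exploit the $q$-difference equation for $\mathfrak{G}_{n,q}^{(\alpha)}$ together with the generating function of the $q$-Bernoulli polynomials $\mathfrak{B}_{n,q}(0,my)$. The starting point is the identity
\[
\Bigl(\frac{2t}{e_q(t)+1}\Bigr)^{\alpha}e_q(tx)E_q(ty)
=\Bigl(\frac{2t}{e_q(t)+1}\Bigr)^{\alpha}e_q(tx)E_q(-t)\cdot E_q(t(1+y)),
\]
but rather than $E_q(t(1+y))$ I would instead re-scale the variable $t\mapsto t/m$ in an auxiliary copy of the series, since the statement features the scale factor $m$ and the shifted polynomials $\mathfrak{G}_{j,q}^{(\alpha)}(x,-1)$. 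Concretely, first I would rewrite $e_q(ty)$ using the standard trick $[k+1]_q\,\mathfrak{B}_{n-k,q}(0,my)$ comes from differentiating/integrating $t/(e_q(t)-1)$; i.e. I would use the generating relation $\sum_{n}\mathfrak{B}_{n,q}(0,my)\frac{t^n}{[n]_q!}=\frac{t}{e_q(t)-1}E_q(mty)$ and then write $E_q(ty)$ in terms of $E_q(mty)$ after the substitution $t\mapsto t/m$.

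The key algebraic step is to produce the bracketed expression. I expect it to arise as follows: start from $\bigl(\tfrac{2t}{e_q(t)+1}\bigr)^{\alpha}e_q(tx)E_q(-t)$, which is the generating function of $\mathfrak{G}_{n,q}^{(\alpha)}(x,-1)$, multiply and divide by $e_q(t/m)-1$ to introduce the $q$-Bernoulli factor $\frac{t/m}{e_q(t/m)-1}$, and collect the remaining factor $\frac{m}{t}\bigl(e_q(t/m)-1\bigr)$ times the Genocchi-type series. Using the $q$-difference equation from Property 3, namely $\mathfrak{G}_{n,q}^{(\alpha)}(x,0)+\mathfrak{G}_{n,q}^{(\alpha)}(x,-1)=2[n]_q\,\mathfrak{G}_{n-1,q}^{(\alpha-1)}(x,-1)$, the product $\frac{m}{t}(e_q(t/m)-1)$ acting on the series $\sum_j \mathfrak{G}_{j,q}^{(\alpha)}(x,-1)\frac{(t/m)^j}{[j]_q!}$ turns (after a Cauchy product and an index shift) the coefficient of $t^{k+1}$ into exactly $\frac{1}{[k+1]_q}\bigl[2[k+1]_q\sum_{j=0}^{k}\binom{k}{j}_q m^{-(k-j)}\mathfrak{G}_{j,q}^{(\alpha-1)}(x,-1) -\sum_{j=0}^{k+1}\binom{k+1}{j}_q m^{-(k+1-j)}\mathfrak{G}_{j,q}^{(\alpha)}(x,-1) -\mathfrak{G}_{k+1,q}^{(\alpha)}(x,0)\bigr]$; the $m^{n-k-1}$ in the denominator then records the residual power of $m$ from the substitution $t\mapsto t/m$ before one reads off the $t^n$-coefficient. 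Finally, comparing the $t^n$-coefficient of the two sides of the resulting product of generating functions and invoking the uniqueness of power-series coefficients yields the claimed double sum.

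The main obstacle will be bookkeeping: tracking the powers of $m$ correctly through the substitution $t\mapsto t/m$ and ensuring that the three pieces of the bracket (the $\mathfrak{G}^{(\alpha-1)}$ term with upper index $k$, the $\mathfrak{G}^{(\alpha)}$ term with upper index $k+1$, and the lone $\mathfrak{G}_{k+1,q}^{(\alpha)}(x,0)$ term) emerge with the right $q$-binomial weights after the Cauchy product and the shift $k\mapsto k+1$. A secondary technical point is justifying the manipulation $\frac{2t}{e_q(t)+1}=\bigl(\frac{2t}{e_q(t)+1}\bigr)\cdot\frac{e_q(t)-1}{t}\cdot\frac{t}{e_q(t)-1}$ and the use of $e_q(z)E_q(-z)=1$ at the level of formal power series, which is routine given the analyticity statements in the Definitions but should be mentioned. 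Once the coefficient identity is in place, no convergence issue remains since everything is an identity of formal power series in $t$ with coefficients that are polynomials in $x,y$.
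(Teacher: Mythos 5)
There is a genuine gap at the very first step: the factorization you propose does not reproduce the generating function of the left-hand side. Your announced starting identity $E_q(ty)=E_q(-t)\,E_q(t(1+y))$ is false for $q\neq 1$ (the $q$-exponential $E_q$ satisfies no addition theorem), and the same defect survives in the plan you actually follow: if you attach $E_q(-t)$ to the Genocchi factor so as to generate $\mathfrak{G}_{n,q}^{(\alpha)}(x,-1)$ directly, then the product of your two blocks, $\bigl[\tfrac{m}{t}(e_q(t/m)-1)\bigl(\tfrac{2t}{e_q(t)+1}\bigr)^{\alpha}e_q(tx)E_q(-t)\bigr]\cdot\bigl[\tfrac{t/m}{e_q(t/m)-1}E_q(ty)\bigr]$, equals $\bigl(\tfrac{2t}{e_q(t)+1}\bigr)^{\alpha}e_q(tx)E_q(-t)E_q(ty)$ and not $\bigl(\tfrac{2t}{e_q(t)+1}\bigr)^{\alpha}e_q(tx)E_q(ty)$. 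The superfluous $E_q(-t)$ can only be removed by inserting $e_q(t)$ (since $e_q(t)E_q(-t)=1$), and that factor cannot be discarded: merged with the Bernoulli block it changes $\mathfrak{B}_{n-k,q}(0,my)$ into $\mathfrak{B}_{n-k,q}(m,my)$, while merged with the Genocchi block it simply cancels $E_q(-t)$ and brings you back to the $(x,0)$ polynomials. Moreover, a Cauchy product of $\tfrac{m}{t}(e_q(t/m)-1)$ with a series whose coefficients are $\mathfrak{G}_{j,q}^{(\alpha)}(x,-1)$ can only produce terms in $\mathfrak{G}_{j,q}^{(\alpha)}(x,-1)$; the order-lowered terms $\mathfrak{G}_{j,q}^{(\alpha-1)}(x,-1)$ and the lone $\mathfrak{G}_{k+1,q}^{(\alpha)}(x,0)$ cannot come out of that product, and if one then applies Property 3 the bracket appears with the arguments $(x,0)$ and $(x,-1)$ distributed differently from the statement. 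Also, writing the Genocchi series in the variable $t/m$ puts the powers of $m$ on the wrong index: in the stated bracket they sit on $k-j$ and $k+1-j$, so they must come from the $e_q(t/m)-1$ factor, not from the Genocchi series.

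The workable route --- the one the paper takes --- keeps $e_q(tx)$ attached to the Genocchi factor and never introduces $E_q(-t)$: write $E_q(ty)=\frac{e_q(t/m)-1}{t}\cdot\frac{t}{e_q(t/m)-1}\,E_q\bigl(\tfrac{t}{m}\,my\bigr)$, expand the Cauchy product of $\bigl(\tfrac{2t}{e_q(t)+1}\bigr)^{\alpha}e_q(tx)$ with $e_q(t/m)-1$ (this yields only $\mathfrak{G}_{j,q}^{(\alpha)}(x,0)$, and the shift $n\mapsto n+1$ absorbs the factor $\tfrac{m}{t}$), then convolve with $\sum_{n}\mathfrak{B}_{n,q}(0,my)\,t^{n}/(m^{n}[n]_q!)$. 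This produces the coefficient in the form $\sum_{j=0}^{k+1}\binom{k+1}{j}_q m^{j-k-1}\mathfrak{G}_{j,q}^{(\alpha)}(x,0)-\mathfrak{G}_{k+1,q}^{(\alpha)}(x,0)$, and only at this last stage does one invoke the difference equation of Property 3, $\mathfrak{G}_{j,q}^{(\alpha)}(x,0)=2[j]_q\,\mathfrak{G}_{j-1,q}^{(\alpha-1)}(x,-1)-\mathfrak{G}_{j,q}^{(\alpha)}(x,-1)$, together with $[j]_q\binom{k+1}{j}_q=[k+1]_q\binom{k}{j-1}_q$, to convert that bracket into the stated one (this is the role played by the Property 6 type recurrence cited in the paper). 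So your ingredients (the $e_q(t/m)-1$ insertion, the rescaled Bernoulli generating function, Property 3) are the right ones, but they must be deployed in this order; as written, your factorization is not an identity and the argument does not go through.
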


\begin{proof}
Using the following identity%
\[
\left(  \frac{2t}{e_{q}\left(  t\right)  +1}\right)  ^{\alpha}e_{q}\left(
tx\right)  E_{q}\left(  ty\right)  =\left(  \frac{2t}{e_{q}\left(  t\right)
+1}\right)  ^{\alpha}e_{q}\left(  tx\right)  \cdot\frac{e_{q}\left(  \frac
{t}{m}\right)  -1}{t}\cdot\frac{t}{e_{q}\left(  \frac{t}{m}\right)  -1}\cdot
E_{q}\left(  \frac{t}{m}my\right)
\]
we have%
\begin{align*}%
%TCIMACRO{\dsum \limits_{n=0}^{\infty}}%
%BeginExpansion
{\displaystyle\sum\limits_{n=0}^{\infty}}
%EndExpansion
\mathfrak{G}_{n,q}^{\left(  \alpha\right)  }\left(  x,y\right)  \frac{t^{n}%
}{\left[  n\right]  _{q}!}  &  =\frac{m}{t}%
%TCIMACRO{\dsum \limits_{n=0}^{\infty}}%
%BeginExpansion
{\displaystyle\sum\limits_{n=0}^{\infty}}
%EndExpansion
\left(
%TCIMACRO{\dsum \limits_{k=0}^{n}}%
%BeginExpansion
{\displaystyle\sum\limits_{k=0}^{n}}
%EndExpansion
\left[
\begin{array}
[c]{c}%
n\\
k
\end{array}
\right]  _{q}\frac{1}{m^{n-k}}\mathfrak{G}_{k,q}^{\left(  \alpha\right)
}\left(  x,0\right)  -\mathfrak{G}_{n,q}^{\left(  \alpha\right)  }\left(
x,0\right)  \right)  \frac{t^{n}}{\left[  n\right]  _{q}!}%
%TCIMACRO{\dsum \limits_{n=0}^{\infty}}%
%BeginExpansion
{\displaystyle\sum\limits_{n=0}^{\infty}}
%EndExpansion
\mathfrak{B}_{n,q}\left(  0,my\right)  \frac{t^{n}}{m^{n}\left[  n\right]
_{q}!}\\
&  =%
%TCIMACRO{\dsum \limits_{n=1}^{\infty}}%
%BeginExpansion
{\displaystyle\sum\limits_{n=1}^{\infty}}
%EndExpansion
\left(
%TCIMACRO{\dsum \limits_{k=0}^{n}}%
%BeginExpansion
{\displaystyle\sum\limits_{k=0}^{n}}
%EndExpansion
\left[
\begin{array}
[c]{c}%
n\\
k
\end{array}
\right]  _{q}\frac{1}{m^{n-1-k}}\mathfrak{G}_{k,q}^{\left(  \alpha\right)
}\left(  x,0\right)  -m\mathfrak{G}_{n,q}^{\left(  \alpha\right)  }\left(
x,0\right)  \right)  \frac{t^{n-1}}{\left[  n\right]  _{q}!}%
%TCIMACRO{\dsum \limits_{n=0}^{\infty}}%
%BeginExpansion
{\displaystyle\sum\limits_{n=0}^{\infty}}
%EndExpansion
\mathfrak{B}_{n,q}\left(  0,my\right)  \frac{t^{n}}{m^{n}\left[  n\right]
_{q}!}\\
&  =%
%TCIMACRO{\dsum \limits_{n=0}^{\infty}}%
%BeginExpansion
{\displaystyle\sum\limits_{n=0}^{\infty}}
%EndExpansion
\left(
%TCIMACRO{\dsum \limits_{k=0}^{n+1}}%
%BeginExpansion
{\displaystyle\sum\limits_{k=0}^{n+1}}
%EndExpansion
\left[
\begin{array}
[c]{c}%
n+1\\
k
\end{array}
\right]  _{q}m^{k}\mathfrak{G}_{k,q}^{\left(  \alpha\right)  }\left(
x,0\right)  -m^{n+1}\mathfrak{G}_{n+1,q}^{\left(  \alpha\right)  }\left(
x,0\right)  \right)  \frac{t^{n}}{m^{n}\left[  n+1\right]  _{q}!}%
%TCIMACRO{\dsum \limits_{n=0}^{\infty}}%
%BeginExpansion
{\displaystyle\sum\limits_{n=0}^{\infty}}
%EndExpansion
\mathfrak{B}_{n,q}\left(  0,my\right)  \frac{t^{n}}{m^{n}\left[  n\right]
_{q}!}\\
&  =%
%TCIMACRO{\dsum \limits_{n=0}^{\infty}}%
%BeginExpansion
{\displaystyle\sum\limits_{n=0}^{\infty}}
%EndExpansion%
%TCIMACRO{\dsum \limits_{k=0}^{n}}%
%BeginExpansion
{\displaystyle\sum\limits_{k=0}^{n}}
%EndExpansion
\frac{1}{m^{n}\left[  k+1\right]  _{q}}\left(
%TCIMACRO{\dsum \limits_{j=0}^{k+1}}%
%BeginExpansion
{\displaystyle\sum\limits_{j=0}^{k+1}}
%EndExpansion
\left[
\begin{array}
[c]{c}%
k+1\\
j
\end{array}
\right]  _{q}m^{j}\mathfrak{G}_{j,q}^{\left(  \alpha\right)  }\left(
x,0\right)  -m^{k+1}\mathfrak{G}_{k+1,q}^{\left(  \alpha\right)  }\left(
x,0\right)  \right)  \mathfrak{B}_{n-k,q}\left(  0,my\right)  \frac{t^{n}%
}{\left[  n\right]  _{q}!}.
\end{align*}
It remains to use Poperty 6.
\end{proof}

Since $\mathfrak{G}_{n,q}^{\left(  \alpha\right)  }\left(  x,y\right)  $ is
not symmetric with respect to $x$ and $y$ we can prove a different from of the
above theorem. It should be stressed out that Theorems \ref{S-P1} and
\ref{S-P11} coincide in the limiting case when $q\rightarrow1^{-}.$

\begin{theorem}
\label{S-P11}For $n\in\mathbb{N}_{0}$, the following relationship%
\begin{align*}
\mathfrak{G}_{n,q}^{\left(  \alpha\right)  }\left(  x,y\right)   &  =%
%TCIMACRO{\dsum \limits_{k=0}^{n}}%
%BeginExpansion
{\displaystyle\sum\limits_{k=0}^{n}}
%EndExpansion
\left[
\begin{array}
[c]{c}%
n\\
k
\end{array}
\right]  _{q}\frac{1}{m^{n-k-1}\left[  k+1\right]  _{q}}\left[  2\left[
k+1\right]  _{q}\sum_{j=0}^{k}\left[
\begin{array}
[c]{c}%
k\\
j
\end{array}
\right]  _{q}\left(  \frac{1}{m}-1\right)  _{q}^{k-j}\mathfrak{G}%
_{j,q}^{\left(  \alpha-1\right)  }\left(  0,y\right)  \right. \\
&  -\left.  \sum_{j=0}^{k+1}\left[
\begin{array}
[c]{c}%
k+1\\
j
\end{array}
\right]  _{q}\left(  \frac{1}{m}-1\right)  _{q}^{k+1-j}\mathfrak{G}%
_{j,q}^{\left(  \alpha\right)  }\left(  0,y\right)  -\mathfrak{G}%
_{k+1,q}\left(  0,y\right)  \right]  \mathfrak{B}_{n-k,q}\left(  mx,0\right)
\end{align*}
holds true between the $q$-Genocchi and the $q$-Bernoulli polynomials.
\end{theorem}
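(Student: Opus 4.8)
The plan is to imitate the proof of Theorem \ref{S-P1}, but to peel off the $q$-Bernoulli generating function in the variable $mx$ rather than in $my$. This is legitimate precisely because $\mathfrak{G}_{n,q}^{\left(  \alpha\right)  }\left(  x,y\right)  $ is not symmetric in $x$ and $y$: in the product $\left(  \frac{2t}{e_{q}\left(  t\right)  +1}\right)  ^{\alpha}e_{q}\left(  tx\right)  E_{q}\left(  ty\right)  $ the factors $e_{q}\left(  tx\right)  $ and $E_{q}\left(  ty\right)  $ are treated differently, and here it is $e_{q}\left(  tx\right)  $ that should be absorbed into the $q$-Bernoulli piece. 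Concretely, I would start from the identity
\[
\left(  \frac{2t}{e_{q}\left(  t\right)  +1}\right)  ^{\alpha}e_{q}\left(
tx\right)  E_{q}\left(  ty\right)  =\left[  \left(  \frac{2t}{e_{q}\left(
t\right)  +1}\right)  ^{\alpha}E_{q}\left(  ty\right)  \cdot\frac{e_{q}\left(
\frac{t}{m}\right)  -1}{t}\right]  \cdot\left[  \frac{t}{e_{q}\left(  \frac
{t}{m}\right)  -1}e_{q}\left(  \frac{t}{m}mx\right)  \right]  ,
\]
which holds because the two inner factors cancel and $e_{q}\left(  \frac{t}{m}mx\right)  =e_{q}\left(  tx\right)  $.

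For the second bracket, the substitution $s=t/m$ together with the $q$-Bernoulli generating function gives $\frac{t}{e_{q}\left(  t/m\right)  -1}e_{q}\left(  \frac{t}{m}mx\right)  =m\sum_{n=0}^{\infty}\mathfrak{B}_{n,q}\left(  mx,0\right)  \frac{t^{n}}{m^{n}\left[  n\right]  _{q}!}$. For the first bracket I would use that multiplying the $q$-Genocchi generating function by $e_{q}\left(  t/m\right)  $ merely replaces the first argument $0$ by $1/m$: indeed $\left(  \frac{2t}{e_{q}\left(  t\right)  +1}\right)  ^{\alpha}E_{q}\left(  ty\right)  e_{q}\left(  \frac{t}{m}\right)  =\sum_{k=0}^{\infty}\mathfrak{G}_{k,q}^{\left(  \alpha\right)  }\left(  \frac{1}{m},y\right)  \frac{t^{k}}{\left[  k\right]  _{q}!}$, which is the generating-function form of Property 2 with $x=1/m$. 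Hence the coefficient of $\frac{t^{n}}{\left[  n\right]  _{q}!}$ in $\left(  \frac{2t}{e_{q}\left(  t\right)  +1}\right)  ^{\alpha}E_{q}\left(  ty\right)  \left(  e_{q}\left(  t/m\right)  -1\right)  $ is $\mathfrak{G}_{n,q}^{\left(  \alpha\right)  }\left(  \frac{1}{m},y\right)  -\mathfrak{G}_{n,q}^{\left(  \alpha\right)  }\left(  0,y\right)  $, and dividing by $t$ shifts the index, so that the first bracket equals $\sum_{n=0}^{\infty}\frac{\mathfrak{G}_{n+1,q}^{\left(  \alpha\right)  }\left(  1/m,y\right)  -\mathfrak{G}_{n+1,q}^{\left(  \alpha\right)  }\left(  0,y\right)  }{\left[  n+1\right]  _{q}}\frac{t^{n}}{\left[  n\right]  _{q}!}$.

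Multiplying the two bracketed series and reading off the coefficient of $\frac{t^{n}}{\left[  n\right]  _{q}!}$ (which brings in the $q$-binomial coefficients) then gives
\[
\mathfrak{G}_{n,q}^{\left(  \alpha\right)  }\left(  x,y\right)  =\sum_{k=0}
^{n}\left[
\begin{array}
[c]{c}
n\\
k
\end{array}
\right]  _{q}\frac{1}{m^{n-k-1}\left[  k+1\right]  _{q}}\left(  \mathfrak{G}
_{k+1,q}^{\left(  \alpha\right)  }\left(  \frac{1}{m},y\right)  -\mathfrak{G}
_{k+1,q}^{\left(  \alpha\right)  }\left(  0,y\right)  \right)  \mathfrak{B}
_{n-k,q}\left(  mx,0\right)  .
\]
To conclude, I would solve the recurrence of Property 6, with $n$ replaced by $k+1$, for $\mathfrak{G}_{k+1,q}^{\left(  \alpha\right)  }\left(  1/m,y\right)  $, and substitute the resulting expression, a combination of the $\left(  \frac{1}{m}-1\right)  _{q}$-weighted sums of $\mathfrak{G}_{j,q}^{\left(  \alpha-1\right)  }\left(  0,y\right)  $ and of $\mathfrak{G}_{j,q}^{\left(  \alpha\right)  }\left(  0,y\right)  $, into the display above; this gives exactly the asserted formula.

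The bookkeeping of the powers of $m$ and of the single index shift from $1/t$ is routine, and is carried out exactly as in the proof of Theorem \ref{S-P1}. The one step that really needs care, and the point where the argument differs from that of Theorem \ref{S-P1}, is the identification of the intermediate sum as $\mathfrak{G}_{k+1,q}^{\left(  \alpha\right)  }\left(  1/m,y\right)  $: because the extra factor $e_{q}\left(  t/m\right)  $ attaches to the $e_{q}$-slot of the generating function it collapses into a single $q$-Genocchi value, which is precisely what makes Property 6 applicable. This is what produces the $q$-binomial factors $\left(  \frac{1}{m}-1\right)  _{q}^{k-j}$ in the statement, in contrast with the plain powers $1/m^{k-j}$ occurring in Theorem \ref{S-P1}, whose analogous factor instead attaches to the $e_{q}\left(  tx\right)  $-slot and is disposed of with the difference equation of Property 3.
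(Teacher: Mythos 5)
Your proposal is correct and follows essentially the same route as the paper: the paper's proof of Theorem \ref{S-P11} consists precisely of the factorization $\left(\frac{2t}{e_{q}(t)+1}\right)^{\alpha}E_{q}(ty)\cdot\frac{e_{q}(t/m)-1}{t}\cdot\frac{t}{e_{q}(t/m)-1}\cdot e_{q}\left(\frac{t}{m}mx\right)$, with the expansion and the appeal to Property 6 left implicit by analogy with Theorem \ref{S-P1}. Your version simply fills in those details, and your observation that the factor $e_{q}(t/m)$ attaches to the $e_{q}$-slot and collapses to $\mathfrak{G}_{k+1,q}^{(\alpha)}(1/m,y)$, making Property 6 (with $n\mapsto k+1$) directly applicable, is exactly the intended mechanism.
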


\begin{proof}
The proof is based on the following identity%
\[
\left(  \frac{2t}{e_{q}\left(  t\right)  +1}\right)  ^{\alpha}e_{q}\left(
tx\right)  E_{q}\left(  ty\right)  =\left(  \frac{2t}{e_{q}\left(  t\right)
+1}\right)  ^{\alpha}E_{q}\left(  ty\right)  \cdot\frac{e_{q}\left(  \frac
{t}{m}\right)  -1}{t}\cdot\frac{t}{e_{q}\left(  \frac{t}{m}\right)  -1}\cdot
e_{q}\left(  \frac{t}{m}mx\right)  .
\]

\end{proof}

Next we discuss some special cases of Theorems \ref{S-P1} and \ref{S-P11}. By
noting that%
\[
\mathfrak{G}_{j,q}^{\left(  0\right)  }\left(  0,y\right)  =q^{\frac{1}%
{2}j\left(  j-1\right)  }y^{j},\ \ \ \ \ \mathfrak{G}_{j,q}^{\left(  0\right)
}\left(  x,-1\right)  =\left(  x-1\right)  _{q}^{j}%
\]
we deduce from Theorems \ref{S-P1} and \ref{S-P11} Corollary \ref{C:1} below.

\begin{corollary}
\label{C:1}For $n\in\mathbb{N}_{0}$, $m\in\mathbb{N}$ the following
relationship%
\begin{align*}
\mathfrak{G}_{n,q}\left(  x,y\right)   &  =%
%TCIMACRO{\dsum \limits_{k=0}^{n}}%
%BeginExpansion
{\displaystyle\sum\limits_{k=0}^{n}}
%EndExpansion
\left[
\begin{array}
[c]{c}%
n\\
k
\end{array}
\right]  _{q}\frac{1}{m^{n-k-1}\left[  k+1\right]  _{q}}\left[  2\left[
k+1\right]  _{q}\sum_{j=0}^{k}\left[
\begin{array}
[c]{c}%
k\\
j
\end{array}
\right]  _{q}\left(  \frac{1}{m}-1\right)  _{q}^{k-j}q^{\frac{1}{2}j\left(
j-1\right)  }y^{j}\right. \\
&  -\left.  \sum_{j=0}^{k+1}\left[
\begin{array}
[c]{c}%
k+1\\
j
\end{array}
\right]  _{q}\left(  \frac{1}{m}-1\right)  _{q}^{k+1-j}\mathfrak{G}%
_{j,q}\left(  0,y\right)  -\mathfrak{G}_{k+1,q}\left(  0,y\right)  \right]
\mathfrak{B}_{n-k,q}\left(  mx,0\right)  ,
\end{align*}%
\begin{align*}
\mathfrak{G}_{n,q}\left(  x,y\right)   &  =%
%TCIMACRO{\dsum \limits_{k=0}^{n}}%
%BeginExpansion
{\displaystyle\sum\limits_{k=0}^{n}}
%EndExpansion
\left[
\begin{array}
[c]{c}%
n\\
k
\end{array}
\right]  _{q}\frac{1}{m^{n-k-1}\left[  k+1\right]  _{q}}\left[  2\left[
k+1\right]  _{q}%
%TCIMACRO{\dsum \limits_{j=0}^{k}}%
%BeginExpansion
{\displaystyle\sum\limits_{j=0}^{k}}
%EndExpansion
\left[
\begin{array}
[c]{c}%
k\\
j
\end{array}
\right]  _{q}\frac{1}{m^{k-j}}\left(  x-1\right)  _{q}^{j}\right. \\
&  -\left.
%TCIMACRO{\dsum \limits_{j=0}^{k+1}}%
%BeginExpansion
{\displaystyle\sum\limits_{j=0}^{k+1}}
%EndExpansion
\left[
\begin{array}
[c]{c}%
k+1\\
j
\end{array}
\right]  _{q}\frac{1}{m^{k+1-j}}\mathfrak{G}_{j,q}\left(  x,-1\right)
-\mathfrak{G}_{k+1,q}\left(  x,0\right)  \right]  \mathfrak{B}_{n-k,q}\left(
0,my\right)  .
\end{align*}
holds true between the $q$-Bernoulli polynomials and $q$-Euler polynomials.
\end{corollary}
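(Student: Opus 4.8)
The plan is to obtain Corollary \ref{C:1} as a direct specialization of Theorems \ref{S-P1} and \ref{S-P11}, so no new generating-function manipulation is required; the work is entirely substitution of the order-zero values. First I would record the two special values already quoted just before the statement, namely $\mathfrak{G}_{j,q}^{(0)}(0,y)=q^{\frac{1}{2}j(j-1)}y^{j}$ and $\mathfrak{G}_{j,q}^{(0)}(x,-1)=(x-1)_{q}^{j}$; these follow immediately from Property 1 together with Definition of the $q$-Genocchi polynomials, since taking $\alpha=0$ collapses the factor $\bigl(2t/(e_q(t)+1)\bigr)^{\alpha}$ to $1$, leaving $e_q(tx)E_q(ty)$, whose coefficients are the $q$-binomial expansion $(x+y)_q^n$; evaluating at $y=-1$ gives $(x-1)_q^j$ and at $x=0$ gives $q^{\frac12 j(j-1)}y^j$.

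Next I would set $\alpha=1$ in Theorem \ref{S-P11}. On the right-hand side the term $\mathfrak{G}_{j,q}^{(\alpha-1)}(0,y)=\mathfrak{G}_{j,q}^{(0)}(0,y)$ becomes $q^{\frac{1}{2}j(j-1)}y^{j}$, while $\mathfrak{G}_{j,q}^{(\alpha)}(0,y)=\mathfrak{G}_{j,q}(0,y)$ and $\mathfrak{G}_{k+1,q}(0,y)$ stay as order-one Genocchi polynomials (the paper writes $\mathfrak{G}_{j,q}$ for $\mathfrak{G}_{j,q}^{(1)}$). The left-hand side is $\mathfrak{G}_{n,q}^{(1)}(x,y)=\mathfrak{G}_{n,q}(x,y)$. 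This reproduces verbatim the first displayed identity of Corollary \ref{C:1}. Symmetrically, setting $\alpha=1$ in Theorem \ref{S-P1} and using $\mathfrak{G}_{j,q}^{(0)}(x,-1)=(x-1)_q^j$ for the $\mathfrak{G}_{j,q}^{(\alpha-1)}(x,-1)$ term — while $\mathfrak{G}_{j,q}^{(\alpha)}(x,-1)=\mathfrak{G}_{j,q}(x,-1)$ and $\mathfrak{G}_{k+1,q}^{(\alpha)}(x,0)=\mathfrak{G}_{k+1,q}(x,0)$ remain — gives the second displayed identity. I would also note that Theorem \ref{S-P1} as written lacks the outer $q$-binomial coefficient $\left[\begin{smallmatrix} n\\ k\end{smallmatrix}\right]_q$ that appears in Theorem \ref{S-P11} and in Corollary \ref{C:1}; I would either treat this as a typographical omission to be silently corrected when quoting, or re-derive that one factor by carrying the $q$-binomial through the Cauchy product in the proof of Theorem \ref{S-P1}.

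The only genuine content to check is that the substitutions are legitimate term by term, i.e. that each occurrence of $\mathfrak{G}_{j,q}^{(\alpha\pm 1)}$ in the two theorems really is evaluated at an order for which we have the closed form; since $\alpha=1$ makes $\alpha-1=0$ exactly where the lower-order symbol appears and leaves $\alpha=1$ everywhere else, this matches perfectly, and the identity holds for every $n\in\mathbb{N}_0$ and every $m\in\mathbb{N}$ by the corresponding hypotheses of the two theorems. I do not anticipate a real obstacle; the one point requiring a moment's care is bookkeeping the two different decompositions (the $e_q$-on-the-left version giving $\mathfrak{B}_{n-k,q}(0,my)$ and the $E_q$-on-the-left version giving $\mathfrak{B}_{n-k,q}(mx,0)$) so that the $(x,-1)$ specialization is paired with the first and the $(0,y)$ specialization with the second, exactly as displayed. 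Hence the proof reduces to: invoke Theorems \ref{S-P1} and \ref{S-P11} with $\alpha=1$, insert the two order-zero values, and read off the result.

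\begin{proof}
Both assertions follow by specializing Theorems \ref{S-P1} and \ref{S-P11} to the case $\alpha=1$ and using the order-zero evaluations
\[
\mathfrak{G}_{j,q}^{\left(0\right)}\left(0,y\right)=q^{\frac{1}{2}j\left(j-1\right)}y^{j},\qquad \mathfrak{G}_{j,q}^{\left(0\right)}\left(x,-1\right)=\left(x-1\right)_{q}^{j},
\]
which are immediate from Property 1 together with the Definition of $\mathfrak{G}_{n,q}^{\left(\alpha\right)}\left(x,y\right)$: for $\alpha=0$ the generating function reduces to $e_{q}\left(tx\right)E_{q}\left(ty\right)$, whose coefficient of $t^{n}/\left[n\right]_{q}!$ is $\left(x+y\right)_{q}^{n}$, and the two displayed special values are the cases $x=0$ and $y=-1$.

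For the first identity, take $\alpha=1$ in Theorem \ref{S-P11}. Then $\mathfrak{G}_{j,q}^{\left(\alpha-1\right)}\left(0,y\right)=\mathfrak{G}_{j,q}^{\left(0\right)}\left(0,y\right)=q^{\frac{1}{2}j\left(j-1\right)}y^{j}$, while $\mathfrak{G}_{j,q}^{\left(\alpha\right)}\left(0,y\right)=\mathfrak{G}_{j,q}\left(0,y\right)$, $\mathfrak{G}_{k+1,q}\left(0,y\right)$ is unchanged, and the left-hand side becomes $\mathfrak{G}_{n,q}^{\left(1\right)}\left(x,y\right)=\mathfrak{G}_{n,q}\left(x,y\right)$. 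Substituting these values yields precisely the first displayed relationship.

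For the second identity, take $\alpha=1$ in Theorem \ref{S-P1}. Then $\mathfrak{G}_{j,q}^{\left(\alpha-1\right)}\left(x,-1\right)=\mathfrak{G}_{j,q}^{\left(0\right)}\left(x,-1\right)=\left(x-1\right)_{q}^{j}$, while $\mathfrak{G}_{j,q}^{\left(\alpha\right)}\left(x,-1\right)=\mathfrak{G}_{j,q}\left(x,-1\right)$ and $\mathfrak{G}_{k+1,q}^{\left(\alpha\right)}\left(x,0\right)=\mathfrak{G}_{k+1,q}\left(x,0\right)$ are unchanged, and again the left-hand side equals $\mathfrak{G}_{n,q}\left(x,y\right)$. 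Inserting these values gives the second displayed relationship. Both identities hold for all $n\in\mathbb{N}_{0}$ and all $m\in\mathbb{N}$ by the corresponding hypotheses of Theorems \ref{S-P1} and \ref{S-P11}.
\end{proof}
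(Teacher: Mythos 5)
Your proposal is correct and follows exactly the paper's route: the paper deduces Corollary \ref{C:1} from Theorems \ref{S-P1} and \ref{S-P11} at $\alpha=1$ using the same two order-zero evaluations $\mathfrak{G}_{j,q}^{\left(0\right)}\left(0,y\right)=q^{\frac{1}{2}j\left(j-1\right)}y^{j}$ and $\mathfrak{G}_{j,q}^{\left(0\right)}\left(x,-1\right)=\left(x-1\right)_{q}^{j}$. Your remark about the apparently missing outer $q$-binomial coefficient in Theorem \ref{S-P1} is a reasonable editorial observation but does not change the argument.
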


\begin{corollary}
\label{C:2} For $n\in\mathbb{N}_{0}$, $m\in\mathbb{N}$ the following
relationship holds true.%
\begin{align}
G_{n}\left(  x+y\right)   &  =%
%TCIMACRO{\dsum \limits_{k=0}^{n}}%
%BeginExpansion
{\displaystyle\sum\limits_{k=0}^{n}}
%EndExpansion
\left(
\begin{array}
[c]{c}%
n\\
k
\end{array}
\right)  \frac{2}{k+1}\left(  \left(  k+1\right)  y^{k}-G_{k+1,q}\left(
y\right)  \right)  B_{n-k}\left(  x\right)  ,\label{g1}\\
G_{n}\left(  x+y\right)   &  =\sum_{k=0}^{n}\left(
\begin{array}
[c]{c}%
n\\
k
\end{array}
\right)  \frac{1}{m^{n-k-1}\left(  k+1\right)  }\left[  2\left(  k+1\right)
G_{k}\left(  y+\frac{1}{m}-1\right)  -G_{k+1}\left(  y+\frac{1}{m}-1\right)
-G_{k+1}\left(  y\right)  \right]  B_{n-k,q}\left(  mx\right)  \label{g2}%
\end{align}
between the classical Genocchi polynomials and the classical Bernoulli polynomials.
\end{corollary}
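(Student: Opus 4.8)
The plan is to derive both identities from Corollary~\ref{C:1} by passing to the limit $q\to1^{-}$. Since all the sums in Corollary~\ref{C:1} are finite, this limit may be taken inside, term by term, using the elementary limits collected in the Introduction:
\[
\left[\begin{array}{c}n\\k\end{array}\right]_{q}\longrightarrow\binom{n}{k},\qquad [k+1]_{q}\longrightarrow k+1,\qquad q^{\frac12 j(j-1)}\longrightarrow 1,\qquad \left(\tfrac1m-1\right)_{q}^{k-j}\longrightarrow\left(\tfrac1m-1\right)^{k-j},
\]
together with $\mathfrak{G}_{j,q}(x,y)\to G_{j}(x+y)$ and $\mathfrak{B}_{n-k,q}(x,y)\to B_{n-k}(x+y)$. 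Applying these to the first form in Corollary~\ref{C:1} produces an identity of the same outward shape as \eqref{g2}, except that the two inner $j$-sums are still written out; the second form of Corollary~\ref{C:1} gives the same identity with $x$ and $y$ interchanged.

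Next I would collapse those inner sums. The first is the ordinary binomial theorem, $\sum_{j=0}^{k}\binom{k}{j}a^{k-j}u^{j}=(u+a)^{k}$. The second is the classical addition theorem for the Genocchi polynomials, $\sum_{j=0}^{k+1}\binom{k+1}{j}a^{k+1-j}G_{j}(u)=G_{k+1}(u+a)$; this is just the $q\to1^{-}$ case of the second summation formula in Property~2, and it follows directly from $\bigl(\tfrac{2}{e^{t}+1}\bigr)e^{(u+a)t}=\bigl(\tfrac{2}{e^{t}+1}\bigr)e^{ut}e^{at}$. Taking $u=y$, $a=\tfrac1m-1$ and substituting these two evaluations into the bracket turns the identity of the previous paragraph into \eqref{g2}.

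Finally, \eqref{g1} should drop out as the specialization $m=1$ of \eqref{g2}: then $\tfrac1m-1=0$, so the shifted Genocchi term becomes $G_{k+1}(y)$ and the accompanying power becomes $y^{k}$, the factor $m^{n-k-1}$ equals $1$, and the bracket collapses to $2(k+1)y^{k}-2G_{k+1}(y)$, which gives $G_{n}(x+y)=\sum_{k=0}^{n}\binom{n}{k}\tfrac{2}{k+1}\bigl((k+1)y^{k}-G_{k+1}(y)\bigr)B_{n-k}(x)$. Equivalently one may put $m=1$ already in Corollary~\ref{C:1}, where the factor $\left(\tfrac1m-1\right)_{q}^{k-j}$ kills every term of the first inner sum except $j=k$, and only afterwards send $q\to1^{-}$. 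I do not anticipate any genuine obstacle here: the whole argument is a limit of finite sums, and the one step deserving a little care is recognizing the two collapsed $j$-sums — via Property~2 — as a single power and as a shifted Genocchi polynomial, instead of leaving them expanded.
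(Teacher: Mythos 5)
Your route is the paper's own (implicit) one: Corollary \ref{C:2} is stated without proof as the $q\to1^{-}$ limiting case of Corollary \ref{C:1} (equivalently, of Theorems \ref{S-P1} and \ref{S-P11}), and your term-by-term passage to the limit in the finite sums, followed by collapsing the two inner $j$-sums via the binomial theorem and the classical Genocchi addition theorem, is exactly the intended argument; deriving (\ref{g1}) by then setting $m=1$ is also fine.

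One step, however, must not be glossed over: what your collapse actually produces in the bracket is
\[
2\left(k+1\right)\left(y+\tfrac{1}{m}-1\right)^{k}-G_{k+1}\left(y+\tfrac{1}{m}-1\right)-G_{k+1}\left(y\right),
\]
whereas (\ref{g2}) as printed has $2\left(k+1\right)G_{k}\left(y+\tfrac{1}{m}-1\right)$ in the first slot. These are not equal ($G_{k}(z)\neq z^{k}$; e.g. $G_{0}=0$, $G_{1}(z)=1$), and with the printed $G_{k}$ the identity (\ref{g2}) is actually false: for $n=1$, $m=2$ its right-hand side evaluates to $8-4\left(x+y\right)$ rather than $G_{1}\left(x+y\right)=1$, and it is likewise incompatible with (\ref{g1}) at $m=1$. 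The source of the discrepancy is that the first inner sum in Theorem \ref{S-P11} involves $\mathfrak{G}_{j,q}^{\left(\alpha-1\right)}$, which at $\alpha=1$ is the order-zero polynomial $q^{\frac{1}{2}j\left(j-1\right)}y^{j}\to y^{j}$, so its limit collapses to the plain power $\left(y+\frac{1}{m}-1\right)^{k}$, i.e. $G_{k}^{\left(0\right)}\left(y+\frac{1}{m}-1\right)$; the printed $G_{k}$ in (\ref{g2}) should carry that superscript $\left(0\right)$ (the stray $q$ subscripts in (\ref{g1})--(\ref{g2}) are similar typos). So your derivation is mathematically correct, but it establishes the corrected form of (\ref{g2}); you should say explicitly that the binomial collapse gives $\left(y+\frac{1}{m}-1\right)^{k}$ and that (\ref{g2}) must be read accordingly, instead of asserting that your expression coincides with (\ref{g2}) as it stands.
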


Note that the formula (\ref{g2}) is new for the classical polynomials.

In terms of the $q$-Genocchi numbers $\mathfrak{G}_{k,q}^{\left(
\alpha\right)  }$, by setting $y=0$ in Theorem \ref{S-P1}, we obtain the
following explicit relationship between the $q$-Genocchi polynomials
$\mathfrak{G}_{k,q}^{\left(  \alpha\right)  }$ of order $\alpha$ and the
$q$-Bernoulli polynomials.

\begin{corollary}
The following relationship holds true:%
\begin{align*}
\mathfrak{G}_{n,q}^{\left(  \alpha\right)  }\left(  x,0\right)   &  =%
%TCIMACRO{\dsum \limits_{k=0}^{n}}%
%BeginExpansion
{\displaystyle\sum\limits_{k=0}^{n}}
%EndExpansion
\left[
\begin{array}
[c]{c}%
n\\
k
\end{array}
\right]  _{q}\frac{1}{m^{n-k-1}\left[  k+1\right]  _{q}}\left[  2\left[
k+1\right]  _{q}\sum_{j=0}^{k}\left[
\begin{array}
[c]{c}%
k\\
j
\end{array}
\right]  _{q}\left(  \frac{1}{m}-1\right)  _{q}^{k-j}\mathfrak{G}%
_{j,q}^{\left(  \alpha-1\right)  }\right. \\
&  -\left.  \sum_{j=0}^{k+1}\left[
\begin{array}
[c]{c}%
k+1\\
j
\end{array}
\right]  _{q}\left(  \frac{1}{m}-1\right)  _{q}^{k+1-j}\mathfrak{G}%
_{j,q}^{\left(  \alpha\right)  }-\mathfrak{G}_{k+1,q}^{\left(  \alpha\right)
}\right]  \mathfrak{B}_{n-k,q}\left(  mx,0\right)  .
\end{align*}

\end{corollary}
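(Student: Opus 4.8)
The plan is to obtain this corollary as the single specialization $y=0$ of the second of the two Srivastava--Pint\'{e}r--type formulas, i.e.\ of Theorem~\ref{S-P11} (the version whose right-hand side already carries the $q$-Bernoulli factor $\mathfrak{B}_{n-k,q}(mx,0)$; this is the form the statement matches, notwithstanding the citation of Theorem~\ref{S-P1} in the text). The only ingredient beyond that theorem is the elementary observation, recorded just after the generating-function definition of the $q$-Genocchi polynomials, that $\mathfrak{G}_{j,q}^{(\beta)}=\mathfrak{G}_{j,q}^{(\beta)}(0,0)$ for every order $\beta$; this follows at once on setting $x=y=0$ in the generating function $\left(\tfrac{2t}{e_q(t)+1}\right)^{\beta}e_q(tx)E_q(ty)$, which then reduces to $\left(\tfrac{2t}{e_q(t)+1}\right)^{\beta}$, and comparing coefficients of $t^j/[j]_q!$.

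First I would put $y=0$ throughout the statement of Theorem~\ref{S-P11}. The $q$-binomial coefficients, the scalar prefactor $1/(m^{n-k-1}[k+1]_q)$, the powers $\left(\tfrac1m-1\right)_q^{k-j}$ and $\left(\tfrac1m-1\right)_q^{k+1-j}$, and the $q$-Bernoulli factor $\mathfrak{B}_{n-k,q}(mx,0)$ do not involve $y$ and therefore survive verbatim. The three Genocchi factors $\mathfrak{G}_{j,q}^{(\alpha-1)}(0,y)$, $\mathfrak{G}_{j,q}^{(\alpha)}(0,y)$, $\mathfrak{G}_{k+1,q}^{(\alpha)}(0,y)$ collapse, by the observation above, to the $q$-Genocchi numbers $\mathfrak{G}_{j,q}^{(\alpha-1)}$, $\mathfrak{G}_{j,q}^{(\alpha)}$, $\mathfrak{G}_{k+1,q}^{(\alpha)}$. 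Reading the resulting double sum term by term against the claimed identity finishes the argument.

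I do not expect any genuine obstacle here: the substance is entirely in Theorem~\ref{S-P11} (equivalently in Property~6), and the corollary is a one-line specialization. The one place to be careful is purely bookkeeping — keeping the orders $\alpha$ and $\alpha-1$ on the correct Genocchi numbers as $y\to 0$, and not disturbing the index shift $k\mapsto k+1$ inside the bracket — so that the summation ranges agree with the display. For a self-contained alternative one can rerun the generating-function computation of Theorem~\ref{S-P11} with $y=0$ from the outset, using that $E_q(ty)\big|_{y=0}=1$: starting from $\left(\tfrac{2t}{e_q(t)+1}\right)^{\alpha}e_q(tx)=\left(\tfrac{2t}{e_q(t)+1}\right)^{\alpha}\cdot\frac{e_q(t/m)-1}{t}\cdot\frac{t}{e_q(t/m)-1}\cdot e_q\!\left(\tfrac{t}{m}\,mx\right)$, expand the product of the first two factors by means of Property~6 to produce the bracketed combination of $q$-Genocchi numbers, expand the remaining factors in terms of the $q$-Bernoulli polynomials $\mathfrak{B}_{n,q}(mx,0)$, and extract the coefficient of $t^n/[n]_q!$ in the Cauchy product (tracking the powers of $m$ exactly as in the proof of Theorem~\ref{S-P1}).
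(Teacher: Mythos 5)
Your proposal is correct and is essentially the paper's own (implicit) argument: the corollary is obtained by the one-line specialization $y=0$, using $\mathfrak{G}_{j,q}^{\left(\beta\right)}\left(0,0\right)=\mathfrak{G}_{j,q}^{\left(\beta\right)}$, and you rightly observe that the formula actually being specialized is Theorem \ref{S-P11} (whose right-hand side carries $\mathfrak{B}_{n-k,q}\left(mx,0\right)$ and the $\left(\frac{1}{m}-1\right)_{q}$ factors), the paper's citation of Theorem \ref{S-P1} at that point being a slip. No gap remains; your alternative direct generating-function rerun with $E_{q}\left(ty\right)\big|_{y=0}=1$ is just the same computation made explicit.
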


\begin{corollary}
\label{C:3}For $n\in\mathbb{N}_{0}$ the following relationship holds true.%
\[
\mathfrak{G}_{n,q}\left(  x,y\right)  =%
%TCIMACRO{\dsum \limits_{k=0}^{n}}%
%BeginExpansion
{\displaystyle\sum\limits_{k=0}^{n}}
%EndExpansion
\left[
\begin{array}
[c]{c}%
n\\
k
\end{array}
\right]  _{q}\frac{2}{\left[  k+1\right]  _{q}}\left[  \left[  k+1\right]
_{q}q^{\frac{1}{2}k\left(  k-1\right)  }y^{k}-\mathfrak{G}_{k+1,q}\left(
0,y\right)  \right]  \mathfrak{B}_{n-k,q}\left(  x,0\right)  .
\]

\end{corollary}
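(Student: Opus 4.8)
The plan is to obtain Corollary \ref{C:3} as the limiting case $m\to1$ (equivalently $m=1$) of Theorem \ref{S-P1}. Setting $m=1$ kills all the powers $1/m^{n-k-1}$, $1/m^{k-j}$, $1/m^{k+1-j}$ (each becomes $1$), so the inner $j$-sums collapse dramatically. First I would record that with $m=1$ the factor $\dfrac{1}{m^{n-k-1}[k+1]_q}$ becomes $\dfrac{1}{[k+1]_q}$, and the $q$-Bernoulli factor $\mathfrak{B}_{n-k,q}(0,my)$ on the right of Theorem \ref{S-P1} becomes $\mathfrak{B}_{n-k,q}(0,y)$ — wait, that is not what appears in the statement; the statement has $\mathfrak{B}_{n-k,q}(x,0)$. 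So in fact the relevant route is to take $m=1$ in \emph{Theorem \ref{S-P11}} (whose Bernoulli argument is $\mathfrak{B}_{n-k,q}(mx,0)$, which at $m=1$ is exactly $\mathfrak{B}_{n-k,q}(x,0)$) together with the $\alpha=1$ specialization.

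So the concrete steps are: put $\alpha=1$ and $m=1$ in Theorem \ref{S-P11}. With $\alpha=1$ we have $\mathfrak{G}^{(\alpha-1)}_{j,q}=\mathfrak{G}^{(0)}_{j,q}$, and using Property 1 (special values) $\mathfrak{G}^{(0)}_{j,q}(0,y)=q^{\frac12 j(j-1)}y^j$; also $\mathfrak{G}^{(1)}_{j,q}(0,y)=\mathfrak{G}_{j,q}(0,y)$ and $\mathfrak{G}_{k+1,q}(0,y)$ is just the usual ($\alpha=1$) polynomial. With $m=1$ the term $\left(\tfrac1m-1\right)_q^{k-j}=(0)_q^{k-j}$ vanishes unless $j=k$ (where it equals $1$), and likewise $\left(\tfrac1m-1\right)_q^{k+1-j}$ survives only at $j=k+1$. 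Hence the first inner sum $2[k+1]_q\sum_{j=0}^{k}\binom{k}{j}_q(\cdots)\mathfrak{G}^{(0)}_{j,q}(0,y)$ reduces to the single term $2[k+1]_q\,q^{\frac12 k(k-1)}y^{k}$, and the second inner sum $\sum_{j=0}^{k+1}\binom{k+1}{j}_q(\cdots)\mathfrak{G}_{j,q}(0,y)$ reduces to the single term $\mathfrak{G}_{k+1,q}(0,y)$. Combining, the bracket becomes $\dfrac{1}{[k+1]_q}\bigl(2[k+1]_q q^{\frac12 k(k-1)}y^{k}-\mathfrak{G}_{k+1,q}(0,y)-\mathfrak{G}_{k+1,q}(0,y)\bigr)=\dfrac{2}{[k+1]_q}\bigl([k+1]_q q^{\frac12 k(k-1)}y^{k}-\mathfrak{G}_{k+1,q}(0,y)\bigr)$, which is exactly the coefficient claimed in Corollary \ref{C:3}, multiplied by $\binom{n}{k}_q$ and $\mathfrak{B}_{n-k,q}(x,0)$. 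Summing over $k$ gives the stated identity.

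The main obstacle — really the only subtlety — is justifying the vanishing of $(0)_q^{\,r}$ for $r\ge1$ and its value $1$ for $r=0$. Here $(a+b)_q^n$ is the $q$-binomial expansion from the introduction; with $a=\tfrac1m$, $b=-1$, at $m=1$ one has $\left(\tfrac1m-1\right)_q^{r}=(0)_q^{r}=\sum_{i=0}^r\binom{r}{i}_q q^{\frac12 i(i-1)}\cdot1^{\,r-i}\cdot(-1)^i\cdot 0^{\,\text{?}}$ — one must be a little careful because the definition mixes $x^{n-k}y^k$ with $x=1,y=-1$ rather than a single vanishing base. In fact it is cleaner to observe at the generating-function level: the identity underlying Theorem \ref{S-P11} has the factor $\dfrac{e_q(t/m)-1}{t}$, and at $m=1$ this is $\dfrac{e_q(t)-1}{t}$, so the $\dfrac{t}{e_q(t/m)-1}$ piece cancels it outright, i.e. $\dfrac{e_q(t/m)-1}{t}\cdot\dfrac{t}{e_q(t/m)-1}=1$, and the remaining $e_q(\tfrac tm\,mx)=e_q(tx)$ simply restores the original $e_q(tx)$; thus at $m=1$ Theorem \ref{S-P11}'s derivation degenerates and one should instead redo the short computation directly, expanding $E_q(ty)=\sum q^{\frac12 n(n-1)}y^n t^n/[n]_q!$ against $\left(\frac{2t}{e_q(t)+1}\right)\dfrac{e_q(t)-1}{t}$ and then using Property 3 (the difference equation $\mathfrak{G}_{n,q}(1,y)+\mathfrak{G}_{n,q}(0,y)=2[n]_q\mathfrak{G}_{n-1,q}^{(0)}(0,y)$ with $\mathfrak{G}^{(0)}_{n-1,q}(0,y)=q^{\frac12(n-1)(n-2)}y^{n-1}$) exactly as Property 6 was used in Theorem \ref{S-P1}. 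I would therefore write the proof as: "Take $m=1$ in Theorem \ref{S-P11}'' if one is comfortable with the formal collapse of the $q$-binomial factors, and otherwise give the two-line direct generating-function argument; either way the result follows, and I expect no real difficulty beyond this bookkeeping.
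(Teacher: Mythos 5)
Your route is exactly the paper's intended one: Corollary \ref{C:3} is obtained by putting $\alpha=1$ and $m=1$ in Theorem \ref{S-P11} (equivalently, $m=1$ in the first identity of Corollary \ref{C:1}), with the inner $j$-sums collapsing precisely as you describe. The hedge in your last paragraph is unnecessary: at $m=1$ one has $\left(\tfrac{1}{m}-1\right)_{q}^{r}=\sum_{i=0}^{r}\left[\begin{smallmatrix}r\\ i\end{smallmatrix}\right]_{q}q^{i(i-1)/2}(-1)^{i}=\prod_{i=0}^{r-1}\left(1-q^{i}\right)$, which equals $0$ for $r\ge 1$ and $1$ for $r=0$ by the Gauss $q$-binomial identity, so nothing "degenerates" at $m=1$ and the direct specialization is fully justified (your fallback generating-function argument via Property 3 is also correct, just not needed).
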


\begin{corollary}
\label{C:4}For $n\in\mathbb{N}_{0}$ the following relationship holds true.%
\begin{align*}
\mathfrak{G}_{n,q}\left(  x,0\right)   &  =-%
%TCIMACRO{\dsum \limits_{k=0}^{n}}%
%BeginExpansion
{\displaystyle\sum\limits_{k=0}^{n}}
%EndExpansion
\left[
\begin{array}
[c]{c}%
n\\
k
\end{array}
\right]  _{q}\frac{2}{\left[  k+1\right]  _{q}}\mathfrak{G}_{k+1,q}%
\mathfrak{B}_{n-k,q}\left(  x,0\right)  ,\\
\mathfrak{G}_{n,q}  &  =-%
%TCIMACRO{\dsum \limits_{k=0}^{n}}%
%BeginExpansion
{\displaystyle\sum\limits_{k=0}^{n}}
%EndExpansion
\left[
\begin{array}
[c]{c}%
n\\
k
\end{array}
\right]  _{q}\frac{2}{\left[  k+1\right]  _{q}}\mathfrak{G}_{k+1,q}%
\mathfrak{B}_{n-k,q}.
\end{align*}

\end{corollary}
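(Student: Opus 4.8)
The plan is to obtain Corollary~\ref{C:4} directly from Corollary~\ref{C:3} by specializing the parameters, so no fresh generating-function manipulation is needed. First I would set $y=0$ in the identity of Corollary~\ref{C:3}. By Property~1 with $\alpha=0$ we have $\mathfrak{E}_{k,q}^{(0)}(0,0)=\mathfrak{G}_{k,q}^{(0)}(0,0)$, and more importantly the term $q^{\frac{1}{2}k(k-1)}y^{k}$ collapses: for $k\ge 1$ it vanishes at $y=0$, and for $k=0$ it equals $1$. So the bracket $[\,[k+1]_q q^{\frac12 k(k-1)}y^k-\mathfrak{G}_{k+1,q}(0,y)\,]$ becomes $-\mathfrak{G}_{k+1,q}(0,0)=-\mathfrak{G}_{k+1,q}$ for every $k\ge 1$, and for $k=0$ it becomes $[1]_q\cdot 1-\mathfrak{G}_{1,q}(0,0)=1-\mathfrak{G}_{1,q}$.

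The next step is to check that the $k=0$ term also fits the uniform pattern $-\mathfrak{G}_{k+1,q}$, i.e.\ that $\mathfrak{G}_{1,q}=1$ (equivalently, in the limit, $G_1=1$, which is the standard normalization of Genocchi numbers). This follows from the defining generating function $\frac{2t}{e_q(t)+1}=\sum_n \mathfrak{G}_{n,q}\frac{t^n}{[n]_q!}$: expanding $e_q(t)=1+t+O(t^2)$ gives $\frac{2t}{2+t+\cdots}=t\bigl(1-\tfrac t2+\cdots\bigr)$, so $\mathfrak{G}_{0,q}=0$ and $\mathfrak{G}_{1,q}=1$. Hence $1-\mathfrak{G}_{1,q}=0$, and in fact the $k=0$ summand contributes $\frac{2}{[1]_q}\cdot 0\cdot \mathfrak{B}_{n,q}(x,0)=0$, so it may be harmlessly absorbed into the uniform formula $\mathfrak{G}_{n,q}(x,0)=-\sum_{k=0}^n \left[\begin{array}{c}n\\k\end{array}\right]_q \frac{2}{[k+1]_q}\mathfrak{G}_{k+1,q}\,\mathfrak{B}_{n-k,q}(x,0)$. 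This gives the first displayed identity.

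For the second identity I would simply set $x=0$ in the first one and use $\mathfrak{B}_{n-k,q}(0,0)=\mathfrak{B}_{n-k,q}$ together with $\mathfrak{G}_{n,q}(0,0)=\mathfrak{G}_{n,q}$ (both are the defining specializations recorded right after Definition~2). That yields $\mathfrak{G}_{n,q}=-\sum_{k=0}^n \left[\begin{array}{c}n\\k\end{array}\right]_q \frac{2}{[k+1]_q}\mathfrak{G}_{k+1,q}\,\mathfrak{B}_{n-k,q}$, completing the proof.

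The only subtlety — and the one place where a careless argument would go wrong — is the $k=0$ term: one must verify $\mathfrak{G}_{1,q}=1$ (so that the stray $+1$ coming from $q^{\frac12\cdot0\cdot(-1)}y^0\big|_{y=0}$ cancels) rather than silently assuming the bracket is $-\mathfrak{G}_{k+1,q}$ for all $k$. Everything else is a mechanical substitution into Corollary~\ref{C:3}. I would present it as: ``Put $y=0$ (resp.\ $x=y=0$) in Corollary~\ref{C:3}, note $\mathfrak{G}_{1,q}=1$, and simplify.''
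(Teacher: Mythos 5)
Your overall strategy (specialize Corollary \ref{C:3} at $y=0$, then at $x=0$) is the same route the paper implicitly intends, and your intermediate computations are correct: the $k=0$ bracket at $y=0$ is $[1]_{q}\cdot 1-\mathfrak{G}_{1,q}$, and indeed $\mathfrak{G}_{1,q}=1$. The gap is the final step. From ``the $k=0$ summand contributes $0$'' you conclude that it ``may be harmlessly absorbed into the uniform formula'' with the sum starting at $k=0$; but the uniform formula's $k=0$ term is $-\tfrac{2}{[1]_{q}}\mathfrak{G}_{1,q}\mathfrak{B}_{n,q}(x,0)=-2\mathfrak{B}_{n,q}(x,0)$, which is not zero, and a vanishing term cannot be replaced by a nonvanishing one. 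What the substitution $y=0$ in Corollary \ref{C:3} actually yields is
\[
\mathfrak{G}_{n,q}\left(x,0\right)=-\sum_{k=1}^{n}\left[\begin{array}{c}n\\k\end{array}\right]_{q}\frac{2}{\left[k+1\right]_{q}}\mathfrak{G}_{k+1,q}\,\mathfrak{B}_{n-k,q}\left(x,0\right),
\]
with the sum starting at $k=1$; the displayed statement with the sum from $k=0$ differs from this by the extra term $-2\mathfrak{B}_{n,q}(x,0)$ on the right-hand side.

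This is not a slip you can patch in the write-up, because the statement as printed is false under the paper's own definitions: at $n=0$ it reads $0=-2\mathfrak{G}_{1,q}\mathfrak{B}_{0,q}=-2$, and at $n=1$ (take $x=0$) the right-hand side differs from $\mathfrak{G}_{1,q}=1$ by $-2\mathfrak{B}_{1,q}=2/[2]_{q}$; the same failure appears in the classical limit of the corresponding specialization of (\ref{g1}), since $G_{1}=1$. To force the ``uniform pattern'' $-\mathfrak{G}_{k+1,q}$ at $k=0$ you would need the bracket to equal $-\mathfrak{G}_{1,q}$ rather than $1-\mathfrak{G}_{1,q}$, i.e.\ you would have to discard exactly the $y^{0}=1$ term whose handling you correctly identified as the only subtlety. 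So your argument proves the corrected ($k\geq 1$) identity, or equivalently the printed one with the correction $+2\mathfrak{B}_{n,q}(x,0)$ (and $+2\mathfrak{B}_{n,q}$ in the second line), but not the corollary as displayed; the honest conclusion is that the corollary's lower summation limit should be $k=1$.
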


\bigskip

\end{document}